\documentclass[12pt, reqno]{amsart}
\usepackage{amsmath, amsthm, amscd, amsfonts, amssymb, mathtools}
\usepackage{color, geometry, relsize, hyperref}
\usepackage[dvipsnames]{xcolor}
\newtheorem{theorem}{Theorem}[section]
\newtheorem{lemma}[theorem]{Lemma}

\newtheorem{corollary}[theorem]{Corollary}

\theoremstyle{definition}

\newtheorem{example}[theorem]{Example}

\theoremstyle{remark}

\numberwithin{equation}{section}

\begin{document}
	
\title[Vieta-Type Formulas for Matrix Polynomials]
{Vieta-Type Formulas for Matrix Polynomials}

\author[Shrinath]{Shrinath Hadimani}
\address{School of Basic Sciences, Humanities and Management\\ 
	Manipal Institute of Technology, Manipal Academy of Higher Education\\ 
	Manipal, Karnataka-576104, India.}
\email{shrinath.hadimani@manipal.edu}

\subjclass[2010]{15A18, 15A24, 47A56}

\keywords{Matrix Polynomials; Eigenvalues of Matrix Polynomials; Matrix Vieta Formulas}	

\begin{abstract}
The classical Vieta formulas relate the coefficients of a complex scalar polynomial to the elementary 
symmetric polynomials of its roots. In this paper, we establish analogous spectral identities for complex 
matrix polynomials. For a monic matrix polynomial, we prove that the sum of all its eigenvalues equals the 
sum of the roots of the product of its diagonal scalar polynomials and is also equal to a constant multiple 
of the sum of the roots of the scalar polynomial obtained by summing its diagonal entries. We further 
show that the product of the eigenvalues of a monic matrix polynomial is determined by the determinant of its 
constant coefficient matrix. As a consequence, we recover the matrix Vieta formulas of Fuchs and Schwarz \cite{Fuchs-Schwarz}, 
for independent solutions of matrix algebraic equations via a density argument. We also derive corresponding 
identities for non-monic matrix polynomials with nonsingular leading coefficients and show, by means of an example, 
that the identities established for monic matrix polynomials do not extend directly to the non-monic case.
\end{abstract}

\maketitle

\section{Introduction}\label{sec-1}

The classical Vieta formulas establish a fundamental connection between the roots of a scalar 
polynomial and its coefficients. Let $p(\lambda)=\lambda^m+a_{m-1}\lambda^{m-1}+\cdots+a_0$ be 
a complex monic polynomial with roots $\lambda_1,\ldots,\lambda_m$, counted according to 
multiplicity. For $1\le r\le m$, the $r$-th elementary symmetric polynomial in $\lambda_1,\ldots,\lambda_m$ 
is defined by $e_r(\lambda_1,\ldots,\lambda_m)=\displaystyle \sum_{1\le i_1<\cdots< i_r\le m} \lambda_{i_1}\lambda_{i_2}\cdots\lambda_{i_r}$. 
The Vieta formulas state that $e_r(\lambda_1,\ldots,\lambda_m)=(-1)^r a_{m-r}$, for $r=1,\ldots,m$. 
Thus, the coefficients of a polynomial are completely determined by the elementary symmetric 
polynomials of its roots. In particular, for $r=1$ and $r=m$, we have the following respectively,
\begin{equation}\label{classical vieta formula}
\displaystyle \sum_{i=1}^{m}\lambda_i=-a_{m-1};
\qquad \displaystyle \prod_{i=1}^{m}\lambda_i=(-1)^m a_0.
\end{equation} These identities reveal how fundamental properties of the roots of a polynomial 
are encoded in its coefficients. 

Motivated by the classical Vieta formulas, we investigate analogous relations between the eigenvalues of a 
matrix polynomial and its coefficient matrices. For matrix polynomials, such relations are 
considerably less transparent, since the coefficients are matrices rather than scalars, and the eigenvalues 
are determined by the roots of the scalar polynomial obtained by taking the determinant of the matrix polynomial. 
Several attempts to formulate matrix analogues of the Vieta formulas have appeared in 
the literature. In \cite{Fuchs-Schwarz}, Fuchs and Schwarz considered matrix algebraic equations of the form 
$X^m+A_{m-1}X^{m-1}+\cdots+A_1X+A_0=0$, where the coefficients $A_0, A_1, \ldots, A_{m-1}$ and the 
solutions $X$ are complex square matrices of order $n$. Since the coefficients 
generally do not commute with the solutions, the classical Vieta formulas do not admit a direct extension. 
Nevertheless, for a collection of independent matrix solutions (see Section~\ref{sec-2} for the definition), 
$X_1,\ldots,X_m$, Fuchs and Schwarz \cite{Fuchs-Schwarz} established identities relating the coefficient 
matrices to the solutions. However, the resulting expressions for the coefficient matrices $A_i$'s in terms of the independent 
solutions $X_j$'s are much less elegant, than Vieta formulas. Subsequently, in \cite{Connes-Schwarz}, Connes 
and Schwarz revisited this theory, providing an alternative proof and deriving stronger identities involving 
generating functions and higher-order trace relations. These works constitute important noncommutative analogues 
of the classical Vieta theorem. More recently, in \cite{Gonzalez-Santander}, the authors proposed a heuristic method 
for solving polynomial matrix equations of the form $\displaystyle \sum_{k=0}^{m}a_kX^k =B$, where $a_k$ are complex numbers 
and $X, B$ are complex square matrices of order $n$. 

In contrast, the present work is concerned with matrix polynomials and their eigenvalues, and 
derives Vieta-type relations directly in terms of the coefficient matrices of the matrix polynomial.
In particular, we establish analogues of the identities in \eqref{classical vieta formula} and relate 
them to scalar polynomials naturally associated with the given matrix polynomial.

\section{Notations and Preliminaries}\label{sec-2}

Throughout this paper, we work over the field $\mathbb{C}$ of complex numbers. The set of all $n \times n$
matrices over $\mathbb{C}$ is denoted by $M_n(\mathbb{C})$. For a square matrix $A= \left[a_{ij}\right] \in M_n(\mathbb{C})$, 
the trace of $A$, denoted by $\operatorname{tr}(A)$, is defined as the sum of its diagonal entries, that is, 
$\operatorname{tr}(A)=\displaystyle \sum_{i=1}^{n} a_{ii}$. Recall that for matrices 
$A = \left[a_{ij}\right] \in M_m(\mathbb{C})$ and $B = \left[b_{ij}\right] \in M_n(\mathbb{C})$ their 
Kronecker product $A \otimes B$ is the $mn \times mn$ matrix defined by $A \otimes B = \begin{bmatrix}
a_{11}B & a_{12}B & \cdots & a_{1m}B \\
a_{21}B & a_{22}B & \cdots & a_{2m}B \\
\vdots & \vdots & \ddots & \vdots\\
a_{m1}B & a_{m2}B & \cdots & a_{mm}B \\
\end{bmatrix}$. Moreover, the determinant of a Kronecker product satisfies, $\det(A \otimes B) = \det(A)^n \det(B)^m$. 

An $n \times n$ matrix polynomial of degree $m$ is a function $P \colon \mathbb{C} \rightarrow M_n(\mathbb{C})$ 
defined by $P(\lambda) = \displaystyle \sum_{i=0}^{m} A_i \lambda^i$, where $A_i \in M_n(\mathbb{C})$ and $A_m \neq 0$. 
Writing $A_l=\left[
a_l^{(ij)}
\right]_{1\le i,j\le n}$, for $0 \leq l \leq m$, the matrix polynomial $P(\lambda)$ can be expressed as an $n \times n$ matrix
$P(\lambda) = \left[
p_{ij}(\lambda)
\right]$, where $p_{ij} \colon \mathbb{C} \rightarrow \mathbb{C}$ are scalar polynomials given by 
$p_{ij}(\lambda) = \displaystyle \sum_{l=0}^{m} a_l^{(ij)} \lambda^l$. Matrix polynomials have been extensively studied 
due to their numerous applications in linear systems, control theory and vibration analysis (see \cite{Kailath, Lancaster}). 
Comprehensive treatments of matrix polynomials and their spectral theory may be found in the monograph by Gohberg et al.~\cite{GLR-Book-2}.

An $n \times n$ matrix polynomial $P(\lambda)$ is said to be regular if $\det P(\lambda)$ is not identically zero as a polynomial 
in $\lambda$. For a regular matrix polynomial $P(\lambda)$, a scalar $\lambda_0 \in \mathbb{C}$ is called an eigenvalue 
if there exists a nonzero vector $u \in \mathbb{C}^n$ such that $P(\lambda_0)u = 0$. The vector $u \in \mathbb{C}^n$ is 
called an eigenvector of $P(\lambda)$ corresponding to the eigenvalue $\lambda_0$. Equivalently, $\lambda_0 \in \mathbb{C}$ is 
an eigenvalue of a regular matrix polynomial $P(\lambda)$ if and only if $\det P(\lambda_0) = 0$. Moreover, $\lambda_0 = 0$ is 
an eigenvalue of $P(\lambda)$ if and only if $A_0$ is singular. We say that $\infty$ is an eigenvalue of $P(\lambda)$ if $0$ is 
an eigenvalue of the reverse matrix polynomial $\widehat{P} (\lambda):= \lambda^m P(\frac{1}{\lambda}) = A_0 \lambda^m + A_1 \lambda^{m-1} + \cdots + A_{m-1} \lambda + A_m$. 
An $n\times n$ matrix polynomial of degree $m\ge1$ has at most $mn$ eigenvalues, counted with algebraic multiplicity. Furthermore, 
if the leading coefficient matrix is nonsingular, then the matrix polynomial has exactly $mn$ finite eigenvalues, counted with 
algebraic multiplicity.

Consider an $n\times n$ matrix polynomial $P(\lambda) = \displaystyle \sum_{i=0}^{m} A_i \lambda^i$ of degree $m$ 
with nonsingular leading coefficient matrix $A_m$. The corresponding monic matrix polynomial associated with $P(\lambda)$ 
is defined by $\widetilde P(\lambda):= I \lambda^m + V_{m-1} \lambda^{m-1} + \cdots + V_1 \lambda + V_0$, where  $V_i = A_m^{-1}A_i$, 
for $i = 0, \ldots, m-1$. Associated with $P(\lambda)$ is the $mn\times mn$ block companion matrix defined as $C_P:= \begin{bmatrix}
	0 & I & 0 & \cdots &  0\\
	0 & 0 & I & \cdots & 0 \\
	\vdots & \vdots &  \vdots & \ddots & \vdots\\
	0 & 0 & 0 & \cdots & I\\
	-V_0 & -V_1 & -V_2 & \cdots & -V_{m-1}
\end{bmatrix}$. The matrix polynomials $P(\lambda)$, $\widetilde P(\lambda)$, together with the associated block 
companion matrix $C_P$, have the same eigenvalues (see \cite{Higham-Tisseur} for details). We state this as Lemma below 
for convenience.

\begin{lemma}[\cite{Higham-Tisseur}]\label{lem-1}
If $P(\lambda)$ is a complex matrix polynomial with nonsingular leading coefficient,
then the eigenvalues of $P(\lambda)$, $\widetilde P(\lambda)$ and $C_P$ are the same. Further, $u \in \mathbb{C}^n$ is an
eigenvector corresponding to an eigenvalue $\lambda_0$ of $P(\lambda)$ if and only if $u \in \mathbb{C}^n$ is an 
eigenvector corresponding to the eigenvalue $\lambda_0$ of $\widetilde P(\lambda)$ if and only if $\begin{bmatrix}
	u \\ \lambda_0 u \\ \vdots \\ \lambda_0^{m-1} u
\end{bmatrix} \in \mathbb{C}^{mn}$  is an eigenvector
corresponding the eigenvalue $\lambda_0$ of $C_P$ .
\end{lemma}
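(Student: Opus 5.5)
The plan is to compare the three objects through their determinants and through explicit kernel computations, treating $P(\lambda)$ and $\widetilde P(\lambda)$ first and then bringing in $C_P$.

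\textbf{Step 1: $P(\lambda)$ and $\widetilde P(\lambda)$.} Since $A_m$ is nonsingular, the definition $V_i=A_m^{-1}A_i$ gives the factorization $P(\lambda)=A_m\widetilde P(\lambda)$ for every $\lambda\in\mathbb{C}$. Taking determinants,
\[
\det P(\lambda)=\det(A_m)\det\widetilde P(\lambda).
\]
Here $\det(A_m)\neq 0$, so the two determinants are nonzero scalar multiples of each other. Hence they have the same roots with the same multiplicities, and in particular both polynomials are regular. For the eigenvectors, note that $A_m$ is invertible, so $P(\lambda_0)u=0$ holds if and only if $\widetilde P(\lambda_0)u=0$.

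\textbf{Step 2: $\widetilde P(\lambda)$ and $C_P$, eigenvectors.} Suppose $C_P w=\lambda_0 w$ with $w=(w_1,\dots,w_m)$ written in block form. The first $m-1$ block rows read $w_{k+1}=\lambda_0 w_k$, so $w_k=\lambda_0^{k-1}w_1$. Substituting this into the last block row gives
\[
-\sum_{i=0}^{m-1}V_i\lambda_0^i w_1=\lambda_0^m w_1,
\]
that is, $\widetilde P(\lambda_0)w_1=0$. If $w_1=0$, then every block $w_k$ vanishes, so an eigenvector has $w_1\neq 0$. Conversely, if $\widetilde P(\lambda_0)u=0$ with $u\neq 0$, then reading the same equations backwards shows that $(u,\lambda_0u,\dots,\lambda_0^{m-1}u)$ is an eigenvector of $C_P$. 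This settles the eigenvector correspondence and shows the eigenvalue sets coincide.

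\textbf{Step 3: algebraic multiplicities.} To match multiplicities, I would prove the identity
\[
\det(\lambda I_{mn}-C_P)=\det\widetilde P(\lambda).
\]
One route uses the standard unimodular factorization $E(\lambda)(\lambda I-C_P)F(\lambda)=\operatorname{diag}(\widetilde P(\lambda),I,\dots,I)$, with $E,F$ block triangular-type matrices of constant nonzero determinant. An easier route is block Gaussian elimination: for $\lambda\ne0$, successively add $\lambda^{-1}$ times later block columns to earlier ones (or use Horner-type combinations) to reduce $\lambda I-C_P$ to a block triangular form whose diagonal blocks are $\lambda I,\dots,\lambda I$ and $\lambda^{-(m-1)}\widetilde P(\lambda)$. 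The determinant identity then holds for all $\lambda\neq 0$ and extends to $\lambda=0$ by continuity, since both sides are polynomials. With this identity in hand, the two characteristic polynomials agree, so the roots and their multiplicities agree, and there are $mn$ of them.

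\textbf{Main obstacle.} I expect the determinant computation in Step 3 to be the main obstacle. My first attempt would be the Horner-shift elimination: replace block column $k$ by column $k$ plus $\lambda$ times column $k+1$, working from the right. After this, the last block row holds the Horner partial sums and the superdiagonal $-I$ blocks are preserved. Expanding the determinant along the block structure then yields $\det\widetilde P(\lambda)$, up to a sign that must be checked to equal $+1$.
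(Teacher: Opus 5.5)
The paper does not prove this lemma; it quotes it from Higham and Tisseur, so there is no internal argument to compare yours with. Your proposal is a correct, self-contained proof along the standard lines:
\begin{itemize}
\item The factorization $P(\lambda)=A_m\widetilde P(\lambda)$ settles $P$ versus $\widetilde P$: the determinants agree up to the nonzero constant $\det A_m$, and the kernels are the same.
\item Reading off the block rows of $C_Pw=\lambda_0w$ gives the eigenvector correspondence. It even shows that every eigenvector of $C_P$ has the stated form.
\item The identity $\det(\lambda I_{mn}-C_P)=\det\widetilde P(\lambda)$ matches algebraic multiplicities.
\end{itemize}
Step 3 is worth including. The paper's uses of the lemma, $\sum_k\lambda_k=\operatorname{tr}(C_P)$ and $\prod_k\lambda_k=\det(C_P)$, need the eigenvalues to agree with multiplicity, not merely as sets. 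Also, setting $\lambda=0$ in your identity gives $\det(-C_P)=\det V_0$, i.e.\ $\det C_P=(-1)^{mn}\det V_0$. This is exactly what the permutation-matrix computation in the proof of Theorem~\ref{thm-matrix vieta thm-2} establishes.

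There are two minor points in Step 3.

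First, in the Horner route the sign is indeed $+1$. After your column operations, move the first block column to the end. This gives a block lower triangular matrix with diagonal blocks $-I,\dots,-I,\widetilde P(\lambda)$, whose determinant is $(-1)^{n(m-1)}\det\widetilde P(\lambda)$. That factor cancels the sign $(-1)^{n^2(m-1)}=(-1)^{n(m-1)}$ of the block-column shift. More quickly, both sides are monic of degree $mn$, so any constant sign relating them must be $+1$.

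Second, the $\lambda^{-1}$ route has its direction reversed as you wrote it. To clear the superdiagonal $-I$ blocks, replace block column $k+1$ by column $k+1$ plus $\lambda^{-1}$ times column $k$, for $k=1,\dots,m-1$ in that order. This produces exactly the block lower triangular form you describe, with diagonal $\lambda I,\dots,\lambda I,\lambda^{-(m-1)}\widetilde P(\lambda)$. Adding $\lambda^{-1}$ times later columns to earlier ones does not clear those blocks.

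Neither point affects the validity of the proof. The Horner elimination is determinant-preserving for every $\lambda$, including $\lambda=0$, so it needs no continuity argument.
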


The matrices $X_1, X_2, \ldots, X_m \in M_n(\mathbb{C})$ are said to be independent if the determinant of the block 
Vandermonde matrix $V = \begin{bmatrix}
I & I & \dots & I \\
X_1 & X_2 & \dots & X_m \\
\vdots & \vdots & \ddots & \vdots \\
X_1^{m-1} & X_2^{m-1} & \dots & X_m^{m-1} \\
\end{bmatrix}$ is nonzero. Consider the matrix algebraic equation, \begin{equation}\label{eq-matrix algebraic equation}
X^m+A_{m-1}X^{m-1}+\cdots+A_1X+A_0=0,
\end{equation}
with the coefficients $A_0, A_1, \ldots, A_{m-1}$ and the solutions $X$ in $M_n(\mathbb{C})$. A fundamental connection 
between solutions of \eqref{eq-matrix algebraic equation} and the associated matrix polynomial 
$P(\lambda) = I \lambda^m+A_{m-1}\lambda^{m-1}+\cdots+A_1\lambda+A_0$ is that every eigenvalue of a solution 
$X \in M_n(\mathbb{C})$ is also an eigenvalue of $P(\lambda)$. Using this fact, in \cite{Fuchs-Schwarz}, authors proved that 
if $X_1, X_2, \ldots, X_m \in M_n(\mathbb{C})$ are independent solutions of Equation \eqref{eq-matrix algebraic equation}, then
\begin{equation}\label{eq-matrx vieta theorem}
	\operatorname{tr}(A_{m-1})=-\displaystyle \sum_{i=1}^{m}\operatorname{tr}(X_i);
	\qquad \det(A_0)=(-1)^{nm}\displaystyle \prod_{i=1}^{m}\det(X_i).
\end{equation} The identities established in Theorems \ref{thm-matrix vieta thm-1} and \ref{thm-matrix vieta thm-2} may be viewed as spectral 
analogues of the matrix Vieta formulas given in Equation \eqref{eq-matrx vieta theorem}. In fact, we show that the identities 
in Equation \eqref{eq-matrx vieta theorem} can be derived from these spectral identities. More precisely, 
by combining Theorems \ref{eq-thm-1-1} and \ref{eq-thm-1-3} with a density argument analogous to that used in \cite{Fuchs-Schwarz}, 
we recover the identities in Equation \eqref{eq-matrx vieta theorem} for all independent solutions of 
Equation \eqref{eq-matrix algebraic equation}.

Throughout this paper, we assume that the leading coefficient matrix of every matrix polynomial are nonsingular.
 
\section{Main Results}\label{sec-3}	
This section contains the main results of the manuscript. The section is further divide for ease of reading.

\subsection{Vieta-Type Formulas for Monic Matrix Polynomials}\label{sec-3.1}

In this section, we derive Vieta-type identities for monic matrix polynomials. We begin with the following theorem, 
which establishes a relation between the eigenvalues of a monic matrix polynomial and the roots of the product of 
its diagonal entries. In the course of the proof, we show that the sum of the eigenvalues of the matrix polynomial 
is equal to the negative of the trace of the coefficient of the second-highest degree term.

\begin{theorem}\label{thm-matrix vieta thm-1}
Let $P(\lambda) = I \lambda^m + A_{m-1}\lambda^{m-1} + \dots + A_1\lambda + A_0$ be an $n \times n$ monic matrix 
polynomial, where $A_l \in M_n(\mathbb{C})$ for $0 \leq l \leq m-1$. Then the sum of all eigenvalues of $P(\lambda)$ 
equals the sum of the roots of the scalar polynomial $\displaystyle \prod_{i=1}^{n} p_{ii}(\lambda)$.
\end{theorem}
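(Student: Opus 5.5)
We plan to show that both quantities equal $-\operatorname{tr}(A_{m-1})$. Since the leading coefficient is $I$, Lemma~\ref{lem-1} says the eigenvalues of $P(\lambda)$, counted with algebraic multiplicity, are exactly the $mn$ eigenvalues of the block companion matrix $C_P$, where $V_i = A_i$. Hence the sum of all eigenvalues of $P(\lambda)$ equals $\operatorname{tr}(C_P)$. The only nonzero diagonal block of $C_P$ is the bottom-right block $-A_{m-1}$. Therefore
\[
\sum \text{eigenvalues of } P = \operatorname{tr}(C_P) = -\operatorname{tr}(A_{m-1}).
\]
Alternatively, one can avoid the companion matrix and compare coefficients directly. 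Expand $\det P(\lambda)$ via the Leibniz formula: it is monic of degree $mn$. The coefficient of $\lambda^{mn-1}$ comes only from the identity permutation, because any other permutation misses at least two diagonal entries. An off-diagonal entry has degree at most $m-1$, so such a term has total degree at most $mn-2$. This second route is also a useful cross-check.

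Next we treat the scalar side. Each diagonal entry is monic of degree $m$:
\[
p_{ii}(\lambda)=\lambda^m + a_{m-1}^{(ii)}\lambda^{m-1}+\cdots + a_0^{(ii)}.
\]
Hence $\prod_{i=1}^n p_{ii}(\lambda)$ is monic of degree $mn$, and its $\lambda^{mn-1}$ coefficient is $\sum_i a_{m-1}^{(ii)} = \operatorname{tr}(A_{m-1})$. By the classical Vieta formula \eqref{classical vieta formula}, the sum of its $mn$ roots is $-\operatorname{tr}(A_{m-1})$. This matches the value obtained above, which proves the theorem.

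The main point requiring care is the multiplicity bookkeeping. We need the sum over eigenvalues of $P$ with algebraic multiplicity to coincide with the sum of the roots of $\det P(\lambda)$, equivalently with the eigenvalues of $C_P$. This holds because $\det(\lambda I - C_P) = \det P(\lambda)$ for monic $P$. That identity is implicit in Lemma~\ref{lem-1}, and it also follows from the Leibniz argument above, which is why I would include that argument. Everything else is direct coefficient comparison.
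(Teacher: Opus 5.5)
This is correct and is essentially the paper's proof: you show both sides equal $-\operatorname{tr}(A_{m-1})$, using $\operatorname{tr}(C_P)$ for the eigenvalues and the $\lambda^{mn-1}$ coefficient of $\prod_{i} p_{ii}(\lambda)$ for the scalar side. Your Leibniz remark adds something the paper's proof lacks. It shows directly that $\det P(\lambda)$ and $\prod_i p_{ii}(\lambda)$ are monic of degree $mn$ with the same $\lambda^{mn-1}$ coefficient, which settles the multiplicity bookkeeping. It does not prove $\det(\lambda I - C_P)=\det P(\lambda)$, as you suggest, but it makes that identity unnecessary.
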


\begin{proof}
Let $\lambda_1, \lambda_2, \ldots, \lambda_{mn}$ denote the eigenvalues of the monic matrix polynomial $P(\lambda)$, 
counted according to algebraic multiplicity. 
By Lemma~\ref{lem-1}, the eigenvalues of $P(\lambda)$ coincide with those of its block companion matrix
$C_P =\begin{bmatrix}
			0 & I & 0 & \cdots & 0 \\
			0 & 0 & I & \cdots & 0 \\
			\vdots & \vdots & \vdots & \ddots & \vdots \\
			0 & 0 & 0 & \cdots & I \\
			-A_0 & -A_1 & -A_2 & \cdots & -A_{m-1}
\end{bmatrix}$. 
Hence, 
\begin{equation}\label{eq-thm-1-1}
\sum_{k=1}^{mn}\lambda_k = \operatorname{tr}(C_P).
\end{equation} 
Since all diagonal blocks of $C_P$, except the last are zero, its trace is
\begin{equation}\label{eq-thm-1-2}
\operatorname{tr}(C_P) = -\operatorname{tr}(A_{m-1}).
\end{equation}
Write $A_l = \left[a_l^{(ij)}\right]_{1 \le i,j \le n}$, for $0 \leq l \leq m-1$ and 
$P(\lambda) = \left[p_{ij}(\lambda) \right]_{1 \le i,j \le n}$, where
$p_{ij}(\lambda) = \displaystyle \sum_{l=0}^{m} a_l^{(ij)} \lambda^l$. Since $P(\lambda)$ is monic, 
each diagonal entry is a monic scalar polynomial of the form
$p_{ii}(\lambda) = \lambda^m + a_{m-1}^{(ii)} \lambda^{m-1} + \dots + a_0^{(ii)}$, $1 \leq i \leq n$. 
Hence, the sum of the roots of $p_{ii}(\lambda)$ 
is $-a_{m-1}^{(ii)}$.
Therefore, 
\begin{equation}\label{eq-thm-1-3}
\text{sum of the roots of}
\displaystyle \prod_{i=1}^n p_{ii}(\lambda) =
\displaystyle \sum_{i=1}^n \left(-a_{m-1}^{(ii)}\right) = -\operatorname{tr}(A_{m-1}).
\end{equation}
Combining Equations \eqref{eq-thm-1-1}, \eqref{eq-thm-1-2} and \eqref{eq-thm-1-3} yields 
\begin{center}
$\displaystyle \sum_{k=1}^{mn}\lambda_k = \text{sum of the roots of} \displaystyle \prod_{i=1}^n p_{ii}(\lambda)$,
\end{center}
which completes the proof.
\end{proof}
	
The following corollary is consequence of Theorem \ref{thm-matrix vieta thm-1} and relates the sum of the eigenvalues 
of a monic matrix polynomial to the sum of the roots of a scalar polynomial obtained from its diagonal entries.

\begin{corollary}\label{cor-matrix vieta thm-1}
Let $P(\lambda)=I\lambda^m+A_{m-1}\lambda^{m-1}+\cdots+A_0$ be an $n \times n$ monic matrix polynomial, and 
let $S(\lambda)= \displaystyle \sum_{i=1}^{n}p_{ii}(\lambda)$, where $p_{ii}(\lambda)$ denotes the $i$th diagonal 
entry of $P(\lambda)$. If $\lambda_1,\ldots,\lambda_{mn}$ are the eigenvalues of $P(\lambda)$, counted with algebraic 
multiplicity, then $\displaystyle \sum_{k=1}^{mn}\lambda_k = n\cdot\left(\text{sum of the roots of }S(\lambda)\right)$,
where the roots of $S(\lambda)$ are counted with multiplicity.	
\end{corollary}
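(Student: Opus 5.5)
We reduce both sides to $\operatorname{tr}(A_{m-1})$. The proof of Theorem~\ref{thm-matrix vieta thm-1}, through Equations~\eqref{eq-thm-1-1} and \eqref{eq-thm-1-2}, already shows that
\[
\sum_{k=1}^{mn}\lambda_k=\operatorname{tr}(C_P)=-\operatorname{tr}(A_{m-1}).
\]
So it remains to compute the sum of the roots of $S(\lambda)$ in terms of $\operatorname{tr}(A_{m-1})$.

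Write each diagonal entry as $p_{ii}(\lambda)=\lambda^m+a_{m-1}^{(ii)}\lambda^{m-1}+\cdots+a_0^{(ii)}$, which is monic because $P(\lambda)$ is monic. Summing over $i$ gives
\[
S(\lambda)=n\lambda^m+\Big(\sum_{i=1}^n a_{m-1}^{(ii)}\Big)\lambda^{m-1}+\cdots+\sum_{i=1}^n a_0^{(ii)} = n\lambda^m+\operatorname{tr}(A_{m-1})\lambda^{m-1}+\cdots+\operatorname{tr}(A_0).
\]
The leading coefficient is $n\neq 0$, so $S(\lambda)$ has degree exactly $m$ and exactly $m$ roots counted with multiplicity. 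This is the one point that needs care, since otherwise the phrase ``sum of the roots'' would be ambiguous.

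Dividing $S(\lambda)$ by $n$ gives a monic polynomial with the same roots, whose $\lambda^{m-1}$ coefficient is $\operatorname{tr}(A_{m-1})/n$. By the classical Vieta formula \eqref{classical vieta formula},
\[
\text{sum of the roots of } S(\lambda) = -\frac{\operatorname{tr}(A_{m-1})}{n}.
\]
Multiplying by $n$ and comparing with the first display yields
\[
\sum_{k=1}^{mn}\lambda_k=n\cdot\big(\text{sum of the roots of } S(\lambda)\big),
\]
as claimed. There is no real obstacle: the argument is bookkeeping of the $\lambda^{m-1}$ coefficients, together with the remark that $n\ge 1$ keeps the degree of $S(\lambda)$ equal to $m$.
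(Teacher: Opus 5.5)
Your proof is correct and follows essentially the same route as the paper: both reduce each side to $-\operatorname{tr}(A_{m-1})$. The eigenvalue sum comes from the companion-matrix trace computation in the proof of Theorem~\ref{thm-matrix vieta thm-1}, and the root sum comes from reading off the leading coefficient $n$ and the $\lambda^{m-1}$ coefficient $\operatorname{tr}(A_{m-1})$ of $S(\lambda)$ and applying scalar Vieta. Your explicit remark that $n \neq 0$ forces $\deg S = m$ is a small point the paper leaves implicit.
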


\begin{proof}
Since $S(\lambda)=\displaystyle \sum_{i=1}^{n}p_{ii}(\lambda)$, and each diagonal entry $p_{ii}(\lambda)$ is 
monic of degree $m$, we have $S(\lambda)= \displaystyle \sum_{i=1}^n p_{ii}(\lambda)
= n\lambda^m + \left(\sum_{i=1}^n a_{m-1}^{(ii)}\right)\lambda^{m-1} + \cdots + \left(\sum_{i=1}^n a_0^{(ii)}\right)$.
Thus, the leading coefficient of $S(\lambda)$ is $n$, and the coefficient of $\lambda^{m-1}$ is $\displaystyle \sum_{i=1}^n a_{m-1}^{(ii)}=\operatorname{tr}(A_{m-1})$.
Hence, by Vieta's formula for scalar polynomials,
\begin{equation}\label{eq:sum-roots-S}
\text{sum of roots of }S(\lambda)= -\frac{\operatorname{tr}(A_{m-1})}{n}.
\end{equation}
On the other hand, by Theorem \ref{thm-matrix vieta thm-1},
\begin{equation}\label{eq:sum-eigs-P}
		\sum_{k=1}^{mn}\lambda_k = -\operatorname{tr}(A_{m-1}).
\end{equation}
Combining \eqref{eq:sum-roots-S} and \eqref{eq:sum-eigs-P} yields $\displaystyle \sum_{k=1}^{mn}\lambda_k = n\cdot\big(\text{sum of the roots of }S(\lambda)\big)$. 
This completes the proof.
\end{proof}	
	
In the scalar setting, Vieta's formulas express the product of the roots of a polynomial in terms of its 
constant coefficient. The following theorem establishes an analogous identity for matrix polynomials.	
	
\begin{theorem}\label{thm-matrix vieta thm-2} 
Let $P(\lambda) = I \lambda^m + A_{m-1}\lambda^{m-1} + \dots + A_1\lambda + A_0$ be an $n \times n$ monic matrix 
polynomial, where $A_l \in M_n(\mathbb{C})$ for $0 \le l \le m-1$. If $\lambda_1,\ldots,\lambda_{mn}$ are the 
eigenvalues of $P(\lambda)$, counted with algebraic multiplicity, then 
$\displaystyle \prod_{k=1}^{mn}\lambda_k = (-1)^{mn}\det(A_0)$.
\end{theorem}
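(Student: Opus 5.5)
We will mirror the proof of Theorem~\ref{thm-matrix vieta thm-1}, replacing the trace by the determinant. By Lemma~\ref{lem-1}, the eigenvalues $\lambda_1,\ldots,\lambda_{mn}$ of the monic polynomial $P(\lambda)$, counted with algebraic multiplicity, are exactly the eigenvalues of the block companion matrix $C_P$. Here $C_P$ has last block row $-A_0,-A_1,\ldots,-A_{m-1}$ and identity blocks on the block superdiagonal. Since the product of the eigenvalues of a square matrix equals its determinant, we get $\prod_{k=1}^{mn}\lambda_k=\det(C_P)$. The task therefore reduces to showing $\det(C_P)=(-1)^{mn}\det(A_0)$.

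To compute $\det(C_P)$, we will permute block columns. Move the first block column (which is $0,\ldots,0,-A_0$ from top to bottom) to the last position. This is a cyclic shift of $n$ block columns past $n(m-1)$ scalar columns, so it costs a sign $(-1)^{n\cdot n(m-1)}=(-1)^{n(m-1)}$. The result is block upper triangular, with diagonal blocks $I,\ldots,I$ ($m-1$ copies) and $-A_0$. Its determinant is therefore $\det(-A_0)=(-1)^n\det(A_0)$. Multiplying the signs gives $\det(C_P)=(-1)^{n(m-1)}(-1)^{n}\det(A_0)=(-1)^{mn}\det(A_0)$, as required.

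As a consistency check, and as an alternative route that avoids the companion matrix, note that $\det P(\lambda)$ is a monic scalar polynomial of degree $mn$. Its roots are the $\lambda_k$, and its constant term is $\det P(0)=\det(A_0)$. The classical Vieta formula \eqref{classical vieta formula} then gives $\prod_k\lambda_k=(-1)^{mn}\det(A_0)$ directly. The only point needing care here is monicity: $\det P(\lambda)=\lambda^{mn}\det(I+A_{m-1}\lambda^{-1}+\cdots)$, so the top coefficient is $1$. Monicity also ensures that the roots of $\det P$ with multiplicity are the algebraic multiplicities of the eigenvalues.

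The main obstacle, such as it is, is the sign bookkeeping in the permutation step. I would present the determinant argument via $\det P(\lambda)$ as the primary proof because it is cleanest, and keep the companion-matrix computation as a remark parallel to Theorem~\ref{thm-matrix vieta thm-1}.
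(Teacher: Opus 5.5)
Your proposal is correct. The companion-matrix computation is essentially the paper's proof. The paper left-multiplies $C_P$ by $\Pi\otimes I$, a block cyclic row permutation with sign $\det(\Pi)^n=(-1)^{n(m-1)}$. This gives a block upper triangular matrix with diagonal blocks $-A_0,I,\ldots,I$. You permute block columns instead and count transpositions.

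There is one small slip in that step. Moving the first block column to the end produces
\[
\begin{bmatrix}
I & 0 & \cdots & 0 & 0\\
0 & I & \cdots & 0 & 0\\
\vdots & & \ddots & & \vdots\\
0 & 0 & \cdots & I & 0\\
-A_1 & -A_2 & \cdots & -A_{m-1} & -A_0
\end{bmatrix},
\]
which is block \emph{lower} triangular, not upper. The determinant is still $\det(-A_0)$, so the conclusion stands.

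Your primary route via $\det P(\lambda)$ is genuinely different from the paper's and more elementary. It bypasses $C_P$ and Lemma~\ref{lem-1} entirely. It uses only the definition of eigenvalues (with algebraic multiplicity) as roots of $\det P(\lambda)$, plus the scalar Vieta formula.

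Monicity is needed only to show that $\det P(\lambda)$ has exact degree $mn$ and leading coefficient $1$. In the Leibniz expansion, only the identity permutation reaches degree $mn$. The multiplicity claim is then the definition itself, not a consequence of monicity.

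This route also gives more for free:
\begin{itemize}
\item With a nonsingular leading coefficient $L$, the top coefficient becomes $\det L$. This yields part (2) of the theorem in Section~\ref{sec-3.2} without passing to $L^{-1}P$.
\item Every non-identity permutation contributes degree at most $mn-2$. So $\det P(\lambda)$ and $\prod_i p_{ii}(\lambda)$ share their two leading coefficients, which is Theorem~\ref{thm-matrix vieta thm-1} directly.
\end{itemize}

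What the paper's route buys is a visible parallel between the two monic results, as statements about $\operatorname{tr}(C_P)$ and $\det(C_P)$. The cost is reliance on Lemma~\ref{lem-1} with multiplicities. That lemma is itself usually proved through $\det(\lambda I-C_P)=\det P(\lambda)$, which is the identity your direct route works with.
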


\begin{proof}Consider the block matrix $\widetilde{I} =
\begin{bmatrix}
    	0 & 0 & 0 & \cdots & 0 & I  \\
    	I & 0 & 0 & \cdots & 0 & 0  \\
    	0 & I & 0 & \cdots & 0 & 0  \\
    	\vdots & \vdots & \vdots & \ddots & \vdots \\
    	0 & 0 & 0 & \cdots & I & 0 
\end{bmatrix}$ of size $mn \times mn$, where each block is of size $n \times n$. Pre-multiplying $C_P$ by 
$\widetilde{I}$ yields the matrix $\widetilde{C_P} =
\begin{bmatrix}
    -A_0 & -A_1 & -A_2 & \cdots & -A_{m-1} \\
    0 & I & 0 & \cdots & 0 \\
    0 & 0 & I & \cdots & 0 \\
    \vdots & \vdots & \vdots & \ddots & \vdots \\
    0 & 0 & 0 & \cdots & I
\end{bmatrix}$. 
Using the Kronecker product, we can express $\widetilde{I} = \Pi \otimes I$, where $\Pi=\begin{bmatrix}
    0 & 0 & 0 &\cdots & 0 & 1 \\
    1 & 0 & 0 &\cdots & 0 & 0 \\
    0 & 1 & 0 & \cdots & 0 & 0 \\
    \vdots & \vdots & \vdots & \ddots & \vdots & \vdots \\
    0 & 0 & 0 & \cdots & 1 & 0
\end{bmatrix}$ is an $m \times m$ permutation matrix and $I$ is the identity matrix of order $n$. 
Hence, by the determinant formula for the Kronecker product,
    \begin{equation}\label{eq-det-1}
    	\det(\widetilde{I})= \det(\Pi \otimes I) = \det(\Pi)^n \det(I)^m = \det(\Pi)^n.
    \end{equation}     
Since $\Pi$ corresponds to the cyclic permutation $(1\,2\,\ldots\,m)$, its determinant is $\det(\Pi) = (-1)^{m-1}$. 
Therefore, from Equation \eqref{eq-det-1},
\begin{equation}\label{eq-det-2}
	\det(\widetilde{C_P}) = \det(\widetilde{I} C_P) = \det(\widetilde{I}) \det(C_P) = (-1)^{n(m-1)} \det(C_P). 
\end{equation}   
Since $\widetilde{C_P}$ is block upper triangular, we have
\begin{equation}\label{eq-det-3}
	\det(\widetilde{C_P}) = \det(-A_0) \det(I)^{m-1} = (-1)^n \det(A_0).
\end{equation}
Combining Equations $\eqref{eq-det-2}$ and $\eqref{eq-det-3}$, we obtain
\begin{equation}
	\det(C_P) = (-1)^{(m-1)n} (-1)^n \det(A_0) = (-1)^{mn}\det(A_0).
\end{equation}\label{eq-det-4}
By Lemma \ref{lem-1}, the eigenvalues of $P(\lambda)$ coincide with those of its block companion matrix
$C_P$. Hence, $\displaystyle \prod_{k=1}^{mn}\lambda_k =\det(C_P)$. Therefore, from Equation \eqref{eq-det-4}, 
$\displaystyle \prod_{k=1}^{mn}\lambda_k=(-1)^{mn}\det(A_0)$, which completes the proof.
\end{proof}

We now derive the matrix Vieta formulas established in Theorem $1.1$ of \cite{Fuchs-Schwarz} as a consequence of 
Theorems \ref{thm-matrix vieta thm-1} and \ref{thm-matrix vieta thm-2}.

\begin{theorem}
Let $X_1,X_2,\ldots,X_m \in M_n(\mathbb C)$ be independent solutions of Equation \eqref{eq-matrix algebraic equation}. 
Then $\operatorname{tr}(A_{m-1})= \displaystyle -\sum_{i=1}^{m}\operatorname{tr}(X_i)$ and
$\det(A_0)=(-1)^{mn} \displaystyle \prod_{i=1}^{m}\det(X_i)$.
\end{theorem}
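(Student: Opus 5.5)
The plan is to show that the spectrum of $C_P$, counted with algebraic multiplicity, is exactly the union of the spectra of $X_1,\ldots,X_m$, counted with their multiplicities. Once this is known, Theorem \ref{thm-matrix vieta thm-1} gives
$\sum_k \lambda_k = -\operatorname{tr}(A_{m-1})$. The left-hand side equals $\sum_i \operatorname{tr}(X_i)$, which is the first identity. Likewise, Theorem \ref{thm-matrix vieta thm-2} gives $\prod_k\lambda_k=(-1)^{mn}\det(A_0)$. The left-hand side equals $\prod_i \det(X_i)$, and multiplying through by $(-1)^{mn}$ gives the second identity.

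To identify the spectrum I would use the block Vandermonde matrix $V$ directly, which avoids a density argument. Each $X_j$ solves \eqref{eq-matrix algebraic equation}, so $X_j^m = -\sum_{l=0}^{m-1} A_l X_j^l$. Consider the $j$-th block column of $V$, namely $(I, X_j, \ldots, X_j^{m-1})^T$. Applying $C_P$ to it shifts the blocks upward, and the last block becomes $-\sum_l A_l X_j^l = X_j^m$. Hence $C_P$ times this column equals $(X_j, X_j^2,\ldots,X_j^m)^T$, which is the same column multiplied on the right by $X_j$. Assembling the columns gives
\[
C_P V = V\,\operatorname{diag}(X_1,\ldots,X_m).
\]
Independence means $\det V\neq 0$, so $C_P$ is similar to the block diagonal matrix $\operatorname{diag}(X_1,\ldots,X_m)$. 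Similarity preserves the characteristic polynomial. Therefore $\operatorname{tr}(C_P)=\sum_i\operatorname{tr}(X_i)$ and $\det(C_P)=\prod_i\det(X_i)$. Combined with the equalities $\operatorname{tr}(C_P)=-\operatorname{tr}(A_{m-1})$ and $\det(C_P)=(-1)^{mn}\det(A_0)$ obtained in the proofs of Theorems \ref{thm-matrix vieta thm-1} and \ref{thm-matrix vieta thm-2}, this yields both formulas.

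The main obstacle is keeping multiplicities straight. Knowing only that each eigenvalue of $X_j$ is an eigenvalue of $P(\lambda)$, as recalled in Section \ref{sec-2}, does not show that the multisets coincide. The intertwining relation with an invertible $V$ is what settles this, and it makes an appeal to density (as in \cite{Fuchs-Schwarz}) unnecessary. If one wished to follow the paper's indicated route instead, the argument would go as follows. First treat the generic case, where $C_P$ has $mn$ distinct eigenvalues and each $X_j$ is diagonalizable; match eigenvectors through Lemma \ref{lem-1}, using the fact that the vectors $(u,\lambda u,\ldots)$ arising from eigenvectors $u$ of the $X_j$ are exactly the columns of $V$ times eigenvectors. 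Then extend by continuity, since both sides of each identity are polynomial in the entries of the $X_j$ and the $A_l$ can be recovered from $V$. The similarity argument above is the cleaner route, and I would present that one.
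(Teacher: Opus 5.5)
Your proof is correct, but it takes a different route from the paper. The paper argues generically. It lets $U$ be the set of independent tuples and $V$ the set of tuples whose $mn$ eigenvalues are pairwise distinct, and cites Fuchs--Schwarz for both being nonempty Zariski-open sets. On $U\cap V$ it uses only the weak fact that every eigenvalue of a solution is an eigenvalue of $P(\lambda)$, plus a counting argument, to conclude that the eigenvalues of the $X_i$ exhaust the spectrum. It then applies Theorems~\ref{thm-matrix vieta thm-1} and~\ref{thm-matrix vieta thm-2} and extends to all of $U$ by continuity. Your intertwining identity $C_PV=V\operatorname{diag}(X_1,\ldots,X_m)$ is correct (the last block row gives $-\sum_l A_lX_j^l=X_j^m$). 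With $\det V\neq 0$ it makes $C_P$ similar to $\operatorname{diag}(X_1,\ldots,X_m)$, so multiplicities match for every independent tuple, not only generic ones.

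Your route buys several things. It needs no Zariski-openness or nonemptiness results imported from Fuchs--Schwarz. It also avoids a subtlety the paper glosses over: for the density step one must let the $A_l$ vary with the tuple, and they are then rational functions of the entries of the $X_j$, with denominator $\det V$, not polynomials as the paper states. Finally, it gives the stronger identity
\[
\det P(\lambda)=\det(\lambda I-C_P)=\prod_{j=1}^m\det(\lambda I-X_j),
\]
which yields all elementary symmetric functions of the eigenvalues at once, by a purely algebraic argument.

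The paper's route is built to show the Fuchs--Schwarz identities as corollaries of its spectral theorems using only eigenvalue inclusion. The cost is the density machinery.
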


\begin{proof}
Let $U$ denote the set of all $m$-tuples $(X_1,\ldots,X_m)$ of independent solutions of Equation \eqref{eq-matrix algebraic equation},
and let $V$ denote the set of all $m$-tuples $(X_1,\ldots,X_m)$ of solutions of Equation \eqref{eq-matrix algebraic equation} 
whose eigenvalues consist of $mn$ pairwise distinct complex numbers. By \cite{Fuchs-Schwarz}, the sets $U$ and $V$ 
are nonempty Zariski-open subsets of $M_n(\mathbb C) \times \cdots \times  M_n(\mathbb C)$. Hence $U\cap V$ is dense in $U$.
Let $(X_1,\ldots,X_m)\in U\cap V$. Since every eigenvalue of a solution of the matrix equation \eqref{eq-matrix algebraic equation} 
is an eigenvalue of the associated matrix polynomial $P(\lambda)=I\lambda^m+A_{m-1}\lambda^{m-1}+\cdots+A_1\lambda+A_0$,
all eigenvalues of $X_1,\ldots,X_m$ are eigenvalues of $P(\lambda)$. Moreover, because the eigenvalues of $X_1,\ldots,X_m$ are 
pairwise distinct and together comprise $mn$ eigenvalues, they constitute the complete set of eigenvalues of $P(\lambda)$. 
Hence, if $\lambda_1, \ldots, \lambda_{mn}$ are the eigenvalues of $P(\lambda)$, then 
$\displaystyle \sum_{k=1}^{mn}\lambda_k =  \sum_{i=1}^{m}\operatorname{tr}(X_i)$. Applying Theorem $\ref{thm-matrix vieta thm-1}$ 
we obtain $\operatorname{tr}(A_{m-1})= \displaystyle -\sum_{i=1}^{m}\operatorname{tr}(X_i)$. Similarly, since 
$\displaystyle \prod_{i=1}^{m}\det(X_i) = \prod_{i=1}^{mn} \lambda_i$, by Theorem $\ref{thm-matrix vieta thm-2}$ 
we obtain $\det(A_0)= \displaystyle (-1)^{mn}\prod_{i=1}^{m}\det(X_i)$. Thus, both identities hold on the dense subset $U\cap V$ of $U$.
	
Now define $F(X_1,\ldots,X_m)=\operatorname{tr}(A_{m-1})+\displaystyle \sum_{i=1}^{m}\operatorname{tr}(X_i)$
and $G(X_1,\ldots,X_m)=\det(A_0)-(-1)^{mn}\prod_{i=1}^{m}\det(X_i)$.
Both $F$ and $G$ are polynomial, and hence continuous functions of the entries of $X_1,\ldots,X_m$.
Since $F=0$ and $G=0$ on the dense subset $U\cap V$ of $U$, it follows that $F=0$ and $G=0$ on all of $U$.
Consequently, $\operatorname{tr}(A_{m-1})= \displaystyle-\sum_{i=1}^{m}\operatorname{tr}(X_i)$ and
$\det(A_0)= \displaystyle (-1)^{mn}\prod_{i=1}^{m}\det(X_i)$ for every independent $m$-tuple of solutions.
\end{proof}

\subsection{Vieta-Type Formulas for Non-monic Matrix Polynomials with nonsingular leading coefficient}\label{sec-3.2}

In this section, we derive Vieta-type identities for non-monic matrix polynomials  with nonsingular leading coefficients. 
Unlike the identities established in Theorems \ref{thm-matrix vieta thm-1}, \ref{thm-matrix vieta thm-2}, and 
Corollary \ref{cor-matrix vieta thm-1} for monic matrix polynomials, the identities obtained here depend explicitly 
on the leading coefficient matrix. We first 
establish the main result and then illustrate this distinction with an example.

\begin{theorem}
Let $P(\lambda)=L\lambda^m+A_{m-1}\lambda^{m-1}+\cdots+A_0$ be an $n \times n$ matrix polynomial with nonsingular leading 
coefficient matrix $L$, and let $S(\lambda)= \displaystyle \sum_{i=1}^{n}p_{ii}(\lambda)$, where $p_{ii}(\lambda)$ denotes 
the $i$th diagonal entry of $P(\lambda)$. If $\lambda_1,\ldots,\lambda_{mn}$ are the eigenvalues of $P(\lambda)$, counted 
with algebraic multiplicity, then
\begin{enumerate}
	\item $\displaystyle \sum_{k=1}^{mn}\lambda_k = -\operatorname{tr} \left( L^{-1}A_{m-1} \right)$,
	
	\item $\displaystyle \prod_{k=1}^{mn}\lambda_k = (-1)^{mn}\det(L^{-1} A_0)$.
	
	\item the sum of the roots of $S(\lambda)$ is $\displaystyle -\frac{\operatorname{tr}(A_{m-1})}{\operatorname{tr}L}$, 
	provided $\operatorname{tr}L \neq 0$.
\end{enumerate}
\end{theorem}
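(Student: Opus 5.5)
The plan is to reduce parts (1) and (2) to the monic case via the associated monic polynomial $\widetilde P(\lambda)=I\lambda^m+V_{m-1}\lambda^{m-1}+\cdots+V_0$ with $V_i=L^{-1}A_i$. Part (3) is a direct scalar Vieta computation.

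First, since $L$ is nonsingular, Lemma~\ref{lem-1} shows that $P(\lambda)$ and $\widetilde P(\lambda)$ have the same eigenvalues. We also need these to agree with algebraic multiplicity. For this I would use the factorization $P(\lambda)=L\,\widetilde P(\lambda)$, which gives
$$\det P(\lambda)=\det(L)\det\widetilde P(\lambda).$$
So the two determinants have the same roots with the same multiplicities, and $\lambda_1,\ldots,\lambda_{mn}$ is exactly the eigenvalue list of $\widetilde P$. Applying Theorem~\ref{thm-matrix vieta thm-1} to $\widetilde P$ (its proof gives $\sum_k\lambda_k=-\operatorname{tr}(V_{m-1})$) yields (1). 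Applying Theorem~\ref{thm-matrix vieta thm-2} to $\widetilde P$ yields $\prod_k\lambda_k=(-1)^{mn}\det(V_0)=(-1)^{mn}\det(L^{-1}A_0)$, which is (2).

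For (3), write $L=[\ell_{ij}]$. Then $p_{ii}(\lambda)=\ell_{ii}\lambda^m+a_{m-1}^{(ii)}\lambda^{m-1}+\cdots$, so summing over $i$ gives
$$S(\lambda)=(\operatorname{tr}L)\lambda^m+\operatorname{tr}(A_{m-1})\lambda^{m-1}+\cdots.$$
When $\operatorname{tr}L\neq0$, $S$ has degree exactly $m$. Scalar Vieta then gives the sum of its roots as $-\operatorname{tr}(A_{m-1})/\operatorname{tr}L$.

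The only delicate point is the multiplicity matching in the first step, since Lemma~\ref{lem-1} as stated only speaks of eigenvalues and eigenvectors. The determinant identity above settles this directly. I would also remark that the hypothesis $\operatorname{tr}L\neq0$ is exactly what keeps $\deg S=m$; without it the leading term drops and the formula in (3) has no meaning.
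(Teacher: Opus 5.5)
Your proposal is correct and follows the paper's proof essentially step for step. You reduce (1) and (2) to the monic polynomial $\widetilde P(\lambda)=L^{-1}P(\lambda)$ via Lemma~\ref{lem-1} and Theorems~\ref{thm-matrix vieta thm-1} and~\ref{thm-matrix vieta thm-2}, and you read (3) off the two leading coefficients of $S(\lambda)$ by scalar Vieta; your use of $\det P(\lambda)=\det(L)\det\widetilde P(\lambda)$ is a small extra that makes explicit the matching of algebraic multiplicities, which the paper leaves implicit in its appeal to Lemma~\ref{lem-1}.
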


\begin{proof}
Since $L$ is nonsingular matrix, consider the corresponding monic matrix polynomial
$\widetilde P(\lambda) := L^{-1}P(\lambda)= I\lambda^m + (L^{-1}A_{m-1})\lambda^{m-1} + \cdots+ (L^{-1}A_0)$ 
and let $C_P$ denote its associated block companion matrix. By Lemma \ref{lem-1}, the eigenvalues of 
$P(\lambda)$, $\widetilde P(\lambda)$ and $C_P$ are the same. Using the same trace argument as in the proof 
of Theorem \ref{thm-matrix vieta thm-1}, we obtain $\displaystyle \sum_{k=1}^{mn}\lambda_k = -\operatorname{tr}(L^{-1}A_{m-1})$. 
Furthermore, applying Theorem \ref{thm-matrix vieta thm-2} to the monic matrix polynomial $\widetilde P(\lambda)$ yields 
$\displaystyle \prod_{k=1}^{mn}\lambda_k = (-1)^{mn}\det(L^{-1} A_0)$. Next, by construction the scalar polynomial 
$S(\lambda)=\displaystyle \sum_{i=1}^n p_{ii}(\lambda) = (\operatorname{tr}L)\lambda^m + \operatorname{tr}(A_{m-1})\lambda^{m-1} + \cdots +\operatorname{tr}(A_0)$. 
Thus, the leading coefficient of $S(\lambda)$ is $\operatorname{tr}(L)$, and the coefficient of $\lambda^{m-1}$ 
is $\operatorname{tr}(A_{m-1})$. By Vieta's formula for scalar polynomials, the sum of roots of 
$S(\lambda) = \displaystyle -\frac{\operatorname{tr}(A_{m-1})}{\operatorname{tr}L}$. This completes the proof.

\end{proof}

We conclude this section with an example illustrating that the identities established in
Theorems \ref{thm-matrix vieta thm-1}, \ref{thm-matrix vieta thm-2}, and Corollary \ref{cor-matrix vieta thm-1}
for monic matrix polynomials do not, in general,
extend to non-monic matrix polynomials with nonsingular leading coefficients.
\begin{example}
Consider the linear matrix polynomial
\begin{center}
	$P(\lambda)=\begin{bmatrix}
		2 & 1 \\ 
		1 & 2
	\end{bmatrix} \lambda + \begin{bmatrix}
		1 & -1 \\ 
		1 & 1
	\end{bmatrix} = \begin{bmatrix}
		 2 \lambda + 1 & \lambda -1 \\
		\lambda +1 & 2 \lambda +1
	\end{bmatrix}$.
\end{center} 

The eigenvalues of $P(\lambda)$ are the zeros of $\det P(\lambda) = 3\lambda^2 +4 \lambda +2$ namely, 
$\lambda_1 =\frac{-2+i\sqrt{2}}{3}$ and $\lambda_2=\frac{-2-i\sqrt{2}}{3}$. Hence, $\lambda_1 + \lambda_2 = -\frac{4}{3}$ 
and $\lambda_1 \lambda_2 = \frac{2}{3}$. The diagonal entries of $P(\lambda)$ are $p_{11}(\lambda)=p_{22}(\lambda)=2\lambda + 1$. 
Therefore, $p(\lambda)=p_{11}(\lambda)p_{22}(\lambda)= (2\lambda + 1)^2$, whose only root is $-\frac{1}{2}$ with multiplicity 
two. Thus the sum of its roots is $-1$. Moreover, $S(\lambda)=p_{11}(\lambda)+p_{22}(\lambda)=2(2\lambda + 1)$, whose unique 
root is $-\frac{1}{2}$. Consequently, $\displaystyle \sum_{k=1}^{2}\lambda_k =-\frac{4}{3}
\neq -1 = \text{sum of the roots of} \ p(\lambda)$, showing that the conclusion of Theorem \ref{thm-matrix vieta thm-1} 
does not hold for non-monic matrix polynomials. Furthermore,
$2\left(\text{sum of the roots of }S(\lambda)\right) = 2\left(-\frac{1}{2}\right) = -1 \neq -\frac{4}{3} =\displaystyle  \sum_{k=1}^{2}\lambda_k$, 
so the conclusion of Corollary \ref{cor-matrix vieta thm-1} also fails for non-monic matrix polynomials.
Finally, $\displaystyle  \prod_{k=1}^{2}\lambda_k = \frac{2}{3} \neq 2 = \det(A_0)$, where $A_0=
\begin{bmatrix}
	1 & -1\\
	1 & 1
\end{bmatrix}$.
Hence, the conclusion of Theorem
\ref{thm-matrix vieta thm-2} does not extend to non-monic matrix polynomials with nonsingular leading coefficients.

\end{example}

%\medskip
%\noindent
%{\bf Acknowledgements:} Pallavi .B and Shrinath Hadimani acknowledge the Council of 
%Scientific and Industrial Research (CSIR) and the University Grants Commission (UGC), 
%Government of India, for financial support through research fellowships.

\bibliographystyle{amsplain}

\end{document}